\begin{document}
\newtheorem{thm}{Theorem}[section]
\newtheorem{lem}[thm]{Lemma}
\newtheorem{prop}[thm]{Proposition}
\newtheorem{cor}[thm]{Corollary}
\theoremstyle{definition}
\newtheorem{ex}[thm]{Example}
\newtheorem{rem}[thm]{Remark}
\newtheorem{prob}[thm]{Problem}
\newtheorem{thmA}{Theorem}
\renewcommand{\thethmA}{}
\newtheorem{defi}[thm]{Definition}
\renewcommand{\thedefi}{}
\input amssym.def
\long\def\alert#1{\smallskip{\hskip\parindent\vrule%
\vbox{\advance\hsize-2\parindent\hrule\smallskip\parindent.4\parindent%
\narrower\noindent#1\smallskip\hrule}\vrule\hfill}\smallskip}
\def\ff{\frak}
\def\Spec{\mbox{\rm Spec}}
\def\type{\mbox{ type}}
\def\Hom{\mbox{ Hom}}
\def\rank{\mbox{ rank}}
\def\Ext{\mbox{ Ext}}
\def\Ker{\mbox{ Ker}}
\def\Max{\mbox{\rm Max}}
\def\End{\mbox{\rm End}}
\def\l{\langle\:}
\def\r{\:\rangle}
\def\Rad{\mbox{\rm Rad}}
\def\Zar{\mbox{\rm Zar}}
\def\Supp{\mbox{\rm Supp}}
\def\Rep{\mbox{\rm Rep}}
\def\cal{\mathcal}
\title[Profinite MV/BL-algebras]{Profinite MV-algebras}
\thanks{2010 Mathematics Subject Classification.
06D35, 06E15, 06D50}
\thanks{\today}
\author{Jean B Nganou}
\address{Department of Mathematics, University of Oregon, Eugene,
OR 97403} \email{nganou@uoregon.edu}
\begin{abstract} We characterize all profinite MV-algebras, these are MV-algebras that are limits of finite MV-algebras. It is shown that these are exactly direct product of finite \L ukasiewicz's chains. We also prove that the category $\mathbb{M}$ of multisets is dually equivalent to the category $\mathbb{P}$ of profinite MV-algebras and homomorphisms that reflect principal maximal ideals. Thus generalizing the corresponding result for finite MV-algebras, and finite multisets.
 \vspace{0.20in}\\
{\noindent} Key words: MV-algebra, profinite, multiset, dually equivalent, maximal ideal, finitely approximable.
\end{abstract}
\maketitle
\section{Introduction}
MV-algebras were introduced by Chang in order to provide an algebraic proof of the completeness theorem of \L ukasiewicz many-valued logic \cite{CC}. \\
An MV-algebra is an Abelian monoid $(A,\oplus , 0)$ with an involution $\neg:A\to A$ (i. e.; $\neg \neg x=x$ for all $x\in A$) satisfying the following axioms for all $x, y\in A$: $\neg 0\oplus x=\neg0$;
$\neg(\neg x\oplus y)\oplus y=\neg(\neg y\oplus x)\oplus x$. For any $x,y\in A$, we write $x\leq y$ if $\neg x\oplus y=\neg 0:=1$. Then, $\leq$ induces a partial order on $A$, which is in fact a lattice order where $x\vee y=\neg(\neg x\oplus y)\oplus y$ and $x\wedge y=\neg(\neg x\vee \neg y)$. An ideal of an MV-algebra is a nonempty subset $I$ of $A$ such that (i) for all $x,y\in I$, $x\oplus y\in I$ and (ii) for all $x\in A$ and $y\in I$ with $x\leq y$, then $x\in I$. A prime ideal of $A$ is proper ideal $P$ such that $x\wedge y\in P$, then $x\in P$ or $y\in P$. Maximal ideal has the usual meaning.\\
The prototype of MV-algebra is the unit real interval $[0,1]$ equipped with the operation of truncated addition $x\oplus y=\text{max}\{x+y,1\}$, negation $\neg x=1-x$, and the identity element $0$. For each integer $n\geq 2$, $\L_n=\left\{0, \dfrac{1}{n-1}, \cdots, \dfrac{n-2}{n-1}, 1\right\}$ is a sub-MV-algebra of $[01]$ (the \L ukasiewicz's chain with $n$ elements), and up to isomorphism every finite MV-chain is of this form.\\
The concept of profiniteness has been investigated on several classes of algebras of logic. It is well known (see, e.g., \cite[Sec.VI.2 and VI.3]{Jo}) that a Boolean algebra is profinite if and only if it is complete and atomic, that a distributive lattice is profinite if and only if it is complete and completely join-prime generated \cite[Thm. 4.4]{GG}, and that a Heyting algebra is profinite if and only if it is finitely approximable, complete and completely join-prime generated \cite[Thm. 3.6]{GG}.  Some other notable works on profinite algebras include: profinite topological orthomodular lattices \cite{CG}, profinite completions of Heyting algebras \cite{GGMM}, and profinite MV-spaces  \cite{ng, dng}.\\
There is no known simple description of the dual space of MV-algebras comparable to Esakia space for Heyting algebras, or Prietley spaces for distributive lattices. For this reason, we carry a completely algebraic analysis of profinite MV-algebras. We obtain that an MV-algebra is profinite if and only if it is isomorphic to a direct product of finite MV-chains. It follows that an MV-chain is profinite if and only if it is finite.\\
It is well known that the category $\mathbb{FMV}$ of finite MV-algebras is dually equivalent to the category of finite multisets. Among the categories that contain $\mathbb{FMV}$ as a full subcategory, one has the category $\mathbb{LFMV}$ of locally finite MV-algebras, and the category $\mathbb{P}$ of profinite MV-algebras. The duality was extended to locally finite MV-algebras in \cite{CDM}, yielding an equivalence between generalized multisets and $\mathbb{LFMV}^{op}$. Very recently, the later duality was extended further to locally weakly finite MV-algebras \cite{CM}. In the last section of the paper, we extend the duality to profinite MV-algebras, and obtain that the category $\mathbb{M}$ of multisets is dually equivalent to the category $\mathbb{P}$ of profinite MV-algebras and homomorphisms that reflect principal maximal ideals.
\section{Profinite MV-algebras}
Recall that an inverse system in a category $\mathcal{C}$ is a family $\{A_i,\varphi_{ij}\}_{i\in I}$ of objects, indexed by a directed poset $I$ (for every $i, j\in I$, there exists $k\in I$ such that $i\leq k$ and $j\leq k$), together with a family of morphisms $\varphi_{ij}:A_j\to A_i$, or each $i\leq j$, satisfying the following conditions. \\
$(i) \varphi_{kj}=\varphi_{ki}\circ \varphi_{ij}$ for all $k\leq i\leq j$;\\
$(ii) \varphi_{ii}=1_{A_i}$ for all $i\in I$.\\
Given an inverse system $\{A_i,\varphi_{ij}\}_{i\in I}$, an inverse limit of this system is an object $A$ together with a family of morphisms $\varphi_i:A\to A_i$ satisfying the condition $\varphi_{ij}\circ \varphi_j=\varphi_i$ when $i\leq j$ and having the following universal property: for every object $B$ of $\mathcal{C}$ together with a family $\psi_i:B\to A_i$, if $\varphi_{ij}\circ \psi_j=\psi_i$ for $i\leq j$, then there exists a unique morphism $\psi:B\to A$ such that $\varphi_i\circ \psi=\psi_i$ for all $i\in I$.\\
The inverse limit of an inverse system $\{A_i,\varphi_{ij}\}_{i\in I}$, when it exists, is unique up to isomorphism and often denoted by $\varprojlim \{A_i,\varphi_{ij}\}_{i\in I}$, or simply by $\varprojlim \{A_i\}_{i\in I}$ if the transition maps $(\varphi_{ij})$ are understood.\\
Using the terminology of \cite{Jo}, we call an algebra profinite if is isomorphic to the inverse limit of finite algebras of the same type.\\ Let $\left ( \{A_i,\varphi_{ij}\}\right )_I$ be an inverse system of MV-algebras. As in many varieties of algebras, it is easy to see that $$\varprojlim \{A_i\}_{i\in I}\cong \left\{(a_i)\in \prod_I A_i:\varphi_{ij}(a_j)=a_i \; \text{whenever}\; i\leq j\right\}$$
Let $A$ be an algebra and $I$ is the set of all congruences $\theta$ of $A$ such that $A/\theta$ is finite. If the class of an element $a\in A$ modulo $\theta$ is denoted by $[a]_{\theta}$, then for $\phi \subseteq \theta$, there is a canonical projection $\varphi_{\phi \theta}:A/\phi\to A/\theta$ given by $\varphi_{\phi \theta}([a]_\phi)=[a]_\theta$. It follows easily that $\left ( \{A/\theta,\varphi_{\phi \theta}\}\right )_I$ is an inverse system. Let $\widehat{A}$ be the inverse limit of this system. Then, it is well-known that $$\widehat{A}=\left\{([a]_\theta)_\theta\in \prod_IA/\theta:\varphi_{\phi \theta}([a]_\phi)=[a]_\theta \; \text{whenever}\; \phi \subseteq \theta\right\} $$
$\widehat{A}$ is called the profinite completion of the algebra $A$. Note that there is a canonical homomorphism $e:A\to \widehat{A}$ given by $e(a)=([a]_\theta)_{\theta\in I}$.\\
To start our algebraic analysis of profinite MV-algebras, we find necessary conditions to profiniteness. One clear such condition is finite approximability. Recall that an algebra is called finitely approximable if it is (isomorphic) to a sub-algebra of a direct product of finite algebras of the same type. It is known (se, e.g., \cite[Prop. 3.2]{GG}) that an algebra is finitely approximable if and only if the morphism $e:A\to \widehat{A}$ above is injective. For MV-algebras, given that every finite MV-algebras is the direct product of finite MV-chains, finite approximability means isomorphic to a sub-MV-algebra of a direct product of finite MV-chains.
\begin{prop}\label{complete}Let $A$ be a profinite MV-algebra. Then,\\
(1) $A$ is complete and finitely approximable.\\
(2) $A$ is simple if and only if $A$ a finite MV-chain.\\
\end{prop}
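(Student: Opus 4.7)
The plan is to exploit the concrete description of the inverse limit as coherent sequences inside a product, which makes both claims almost formal once one uses the finiteness of each factor. For (1), I would fix an inverse system $\{A_i,\varphi_{ij}\}_{i\in I}$ of finite MV-algebras with $A\cong\varprojlim A_i=\{(a_i)\in\prod_I A_i:\varphi_{ij}(a_j)=a_i\ \text{whenever}\ i\le j\}$; this realization exhibits $A$ directly as a sub-MV-algebra of the product $\prod_I A_i$ of finite MV-algebras, which by definition is precisely what finite approximability requires.

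For completeness, I would check that $A$ is closed under arbitrary joins computed componentwise in $\prod_I A_i$ (meets then follow via the involution $\neg$). Given a family $(a^{(\lambda)})_{\lambda\in\Lambda}$ in $A$ and indices $i\le j$, the set $\{a_j^{(\lambda)}:\lambda\in\Lambda\}\subseteq A_j$ is finite because $A_j$ is, so $\bigvee_\lambda a_j^{(\lambda)}$ reduces to a finite join, which the MV-homomorphism $\varphi_{ij}$ preserves. Coherence then gives $\varphi_{ij}\bigl(\bigvee_\lambda a_j^{(\lambda)}\bigr)=\bigvee_\lambda \varphi_{ij}(a_j^{(\lambda)})=\bigvee_\lambda a_i^{(\lambda)}$, so the componentwise join lies in $A$. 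This is the step I expect to be the most delicate: completeness is not inherited from a general preservation property of MV-homomorphisms (they need not preserve infinite joins), but from the collapse of infinite joins to finite ones after projection to each finite factor.

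For (2), the direction ($\Leftarrow$) is standard, since each $\L_n$ has only the two trivial ideals. For ($\Rightarrow$), let $\pi_i\colon A\to A_i$ denote the canonical projections from the inverse limit realization. Because $A$ is simple, the ideal $\ker\pi_i$ is either $\{0\}$ or $A$, so each $\pi_i$ is either injective or identically zero. If every $\pi_i$ were zero, every coherent sequence would be identically $0$ and $A$ would be trivial, contradicting simplicity. Hence some $\pi_i$ is injective and $A$ embeds in the finite algebra $A_i$, making $A$ finite. Since every finite MV-algebra decomposes as a product of finite MV-chains (as recalled in the paragraph preceding the proposition), simplicity forces this product to have a single factor, giving $A\cong\L_n$ for some $n$.
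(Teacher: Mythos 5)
Your proposal is correct and follows essentially the same route as the paper: realize $A$ as the coherent sequences inside $\prod_I A_i$, compute suprema componentwise using the fact that each projection of a subset onto a finite factor is a finite set whose join is preserved by the transition maps, and for (2) use injectivity of some projection onto a finite factor to conclude finiteness and then invoke the classification of finite simple MV-algebras. Your handling of the case where a projection has kernel equal to all of $A$ is a small refinement of the paper's blanket assertion that every homomorphism out of a simple algebra is injective, but it is not a different argument.
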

\begin{proof} Suppose $A=\varinjlim A_i\cong \{(a_i)\in \prod_I A_i:\varphi_{ij}(a_j)=a_i \; \text{whenever}\; i\leq j\}$, where each $A_i$ is a finite MV-algebra.\\
(1) Let $S$ be a nonempty subset of $A$, then $S$ is a subset of $\prod_I A_i$, which is complete as each of the $A_i$ is. For each $i\in I$, let $S_i$ denotes the projection of $S$ onto $A_i$. Then each $S_i$ is finite and in $\prod_I A_i$, $\vee S=(\vee S_i)_{i\in I}$. Since the transition morphisms preserve finite suprema, then it is readily verified that $\vee S\in A$. On the other hand, it is clear from the definitions that any profinite MV-algebra is finitely approximable.\\
(2) If $A$ is simple, every homomorphism with domain $A$ is injective. For each $i$, there is a projection $p_i:A\to A_i$ that must be one-to-one, and forcing $A$ to be finite since $A_i$ is. But, finite simple MV-algebras are MV-chains. It is also known that finite MV-chains are simple.
\end{proof}
The following result can be derived from the theory of vector lattices, but for the convenience of the reader, we give a direct self-contained proof here.
\begin{lem}\label{notpro}
Let $A$ be a direct product of copies of $[0,1]$, that is $A=[0,1]^{X}$ for some nonempty set $X$, and $M$ be a maximal ideal of $A$. Then,  $$A/M\cong [0,1]$$
\end{lem}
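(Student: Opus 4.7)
The plan is to construct the isomorphism by composing two natural maps and then showing the composition must be the identity. Let $\pi\colon A\to B:=A/M$ be the quotient. Since $M$ is maximal, $B$ is simple, and every simple MV-algebra embeds into $[0,1]$ (the MV-algebraic form of H\"older's theorem); fix an embedding $\sigma\colon B\hookrightarrow[0,1]$. Let $\iota\colon[0,1]\to A$ be the diagonal inclusion $r\mapsto c_r$, where $c_r\in[0,1]^X$ is the constant function with value $r$; this is an MV-embedding because the operations on $A$ are pointwise. Form the composition $\tau:=\sigma\circ\pi\circ\iota\colon[0,1]\to[0,1]$, which is an MV-homomorphism.

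The crux of the argument is to show that the only MV-endomorphism of $[0,1]$ is the identity. I would first verify that $1/n\in[0,1]$ is the unique solution of $(n-1)\cdot x=\neg x$ (where $k\cdot x$ denotes the $k$-fold $\oplus$-sum of $x$): in $[0,1]$ this equation reads $\min((n-1)x,\,1)=1-x$, and the case $\min=1$ forces the contradiction $x=0$, leaving $nx=1$. Since the equation is preserved by any MV-homomorphism, $\tau(1/n)=1/n$ for every $n\geq 1$, hence $\tau$ fixes every rational of $[0,1]$. Because any MV-homomorphism is order-preserving, for irrational $r\in[0,1]$ and rationals $q_1<r<q_2$ we obtain $q_1=\tau(q_1)\leq\tau(r)\leq\tau(q_2)=q_2$, and letting $q_1,q_2$ approach $r$ forces $\tau(r)=r$.

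Once $\tau=\mathrm{id}_{[0,1]}$, the map $\sigma$ is surjective; being simultaneously injective, it is an isomorphism, and $A/M\cong[0,1]$ follows. The main obstacle is the rigidity statement for $[0,1]$ used in the second paragraph; the rest is essentially formal diagram chasing. One could alternatively try to describe $M$ concretely via an ultrafilter $\mathcal{U}$ on $X$ and exhibit the isomorphism as an $\mathcal{U}$-limit, but this would require a prior classification of the maximal ideals of $[0,1]^X$, which the composition argument above sidesteps.
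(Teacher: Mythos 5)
Your proof is correct and follows essentially the same route as the paper: both use the diagonal embedding $[0,1]\to[0,1]^X$, compose with the quotient (resp.\ the character $\chi$ with kernel $M$) to get an endomorphism of $[0,1]$, and conclude from the rigidity of $[0,1]$ that this composite is the identity, whence surjectivity. The only difference is that you prove the rigidity of $[0,1]$ by a direct equational/density argument, whereas the paper deduces it from the standard bijection between $\mathcal{H}(A)$ and $\mathrm{Max}(A)$ applied to the simple algebra $[0,1]$.
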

\begin{proof}
For any MV-algebra $A$, let $\mathcal{H}(A)$ denotes the set of MV-algebra homomorphisms from $A$ into $[0,1]$ and $Max(A)$ denotes the set of maximal ideals of $A$. It is well known \cite{CDM} that $\chi\mapsto ker\chi$ defines a one-to-one correspondence between $\mathcal{H}(A)$ and $Max(A)$, where $ker \chi=\{a\in A: \chi(a)=0\}$. As a consequence, if $A$ is simple, there is a unique (injective) homomorphism from $A\to [0,1]$; in particular $\mathcal{H}([0,1])=\{Id\}$. Now, suppose $A=[0,1]^{X}$ and $M\in Max(A)$, then $M=ker \chi$ for some $\chi\in \mathcal{H}(A)$. We need to justify that $A/ker \chi$ is isomorphic to $[0,1]$. Consider the map $\tau :[0,1]\to A$ defined by $\tau (t)(x)=t$ for all $t\in [0,1]$ and $x\in X$, then $\tau$ is clearly a homomorphism. Thus, $\chi \circ \tau$ is a homomorphism $[0,1]\to [0,1]$ and it follows from  $\mathcal{H}([0,1])=\{Id\}$ that $\chi \circ \tau=Id$. Now, consider the map $\theta: [0,1]\to A/ker \chi$ defined by $\theta (t)=\tau (t)/ker \chi$, in other words $\theta$ is the composition of $\tau$ followed by the natural projection $A\to A/ker \chi$. Then $\theta$ is a homomorphism, and we claim that $\theta$ is an isomorphism. Since $[0,1]$ is simple, then $\theta$ is injective. For the surjectivity, let $f\in A$ and $t=\chi (f)$. Then, since $\chi \circ \tau=Id$,  $(\chi \circ \tau)(t)=t$. So, $\neg ((\chi \circ \tau )(t))\otimes \chi (f)=0$ and $\neg (\chi (f))\otimes (\chi \circ \tau)(t)=0$. Therefore, $\neg f\otimes \tau (t), \neg (\tau (t))\otimes f\in ker \chi$ and $f/ker \chi =\tau(t)/ker \chi=\theta(t)$. Hence, $\theta$ is an isomorphism as claimed.
\end{proof}
\begin{prop}\label{pro01}
For every non-empty set $X$, the MV-algebra $[0,1]^X$ is not finitely approximable.
\end{prop}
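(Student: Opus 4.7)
The plan is to show something much stronger than the bare statement: every MV-algebra homomorphism from $[0,1]^X$ into a finite MV-algebra is trivial (i.e.\ has image the one-element algebra). Since $X\neq\emptyset$ makes $[0,1]^X$ itself non-trivial, this immediately rules out any embedding of $[0,1]^X$ into a product of finite MV-algebras and so rules out finite approximability.

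To prove this claim, I would take an arbitrary homomorphism $\phi:[0,1]^X\to F$ with $F$ finite and let $K=\ker\phi$. By the first isomorphism theorem, $[0,1]^X/K$ embeds in $F$ and is therefore finite. Arguing by contradiction, suppose $K\neq [0,1]^X$. Then $[0,1]^X/K$ is a non-trivial finite MV-algebra and so has a maximal ideal $N$. Pulling $N$ back along the canonical quotient map yields a maximal ideal $M$ of $[0,1]^X$ containing $K$, and the correspondence theorem gives an isomorphism
\[
\bigl([0,1]^X/K\bigr)\big/N\;\cong\;[0,1]^X/M.
\]
The left-hand side is a quotient of a finite MV-algebra, hence finite, while Lemma~\ref{notpro} identifies the right-hand side with $[0,1]$, which is infinite. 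This contradiction forces $K=[0,1]^X$, so $\phi$ is trivial as claimed.

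To conclude, if $[0,1]^X$ were finitely approximable, the canonical map $e:[0,1]^X\to \widehat{[0,1]^X}$ would be injective; equivalently, for every $f\neq g$ in $[0,1]^X$ there would exist a homomorphism into a finite MV-algebra separating $f$ from $g$. The previous paragraph rules this out entirely, since every such homomorphism collapses everything to $0$. Since $X$ is non-empty, $[0,1]^X$ has distinct elements (for instance $0$ and $1$), giving the required contradiction. The main conceptual step is the middle one: converting the hypothetical non-trivial finite quotient into a maximal quotient so that Lemma~\ref{notpro} becomes applicable; the rest is essentially bookkeeping with the correspondence theorem.
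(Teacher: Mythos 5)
Your argument is correct, and it reaches the same punchline as the paper: manufacture a maximal ideal $M$ of $[0,1]^X$ whose quotient is finite, then contradict Lemma~\ref{notpro}, which says $[0,1]^X/M\cong[0,1]$. The difference lies in how the maximal ideal is produced. The paper first invokes the structure theorem that every finite MV-algebra is a product of finite chains to replace the finite target by some $\L_n$, and then uses the fact that subalgebras of $\L_n$ are simple to conclude that $\ker p$ is already maximal. You instead work with an arbitrary finite target $F$, observe that the (finite, non-trivial) quotient $[0,1]^X/\ker\phi$ has some maximal ideal, and pull it back via the correspondence and third isomorphism theorems. Your route is slightly more self-contained --- it avoids both the decomposition of finite MV-algebras into chains and the simplicity of subalgebras of $\L_n$, needing only standard ideal/quotient bookkeeping --- and it yields the marginally stronger intermediate statement that $[0,1]^X$ admits no homomorphism onto a non-trivial finite MV-algebra. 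One phrasing quibble: ``every such homomorphism collapses everything to $0$'' should really be read as ``there is no homomorphism into a non-trivial finite MV-algebra'' (a homomorphism must send $1$ to $1$, so $\ker\phi=[0,1]^X$ forces the image, hence the relevant factor, to be the one-element algebra); your final separation argument goes through verbatim with that reading, since a trivial factor cannot separate $0$ from $1$.
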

\begin{proof}
Let $A=[0,1]^X$ and suppose by contradiction that $A$ is finitely approximable, then there is a homomorphism from $A$ into a finite MV-algebra (any of the projections). Since every finite MV-algebra is a product of finite MV-chains \cite[Prop. 3.6.5]{C2}, then there exists an integer $n\geq 2$ and a homomorphism $p:A\to \L_n$. Thus, $A/ker p$ is isomorphic to a subalgebra of $\L_n$ and therefore by \cite[Thm. 3.5.1]{C2}, $A/ker p$ is simple, from which it follows that $ker p$ is a maximal ideal of $A$. But, $A/ker p$ is infinite by Lemma \ref{notpro} , which contradicts the fact that $A/ker p$ is isomorphic to a subalgebra of $\L_n$.
\end{proof}
\begin{prop}
The MV-algebra $[0,1]\times \mathbf{2}$ (where $\mathbf{2}$ is the 2-element Boolean algebra) is not finitely approximable.
\end{prop}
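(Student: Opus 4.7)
The plan is to exploit the simplicity of $[0,1]$ (as an MV-chain, hence simple by the fact quoted in the proof of Proposition \ref{pro01}) together with the finiteness constraint. Let $A=[0,1]\times\mathbf{2}$ and let $\iota:[0,1]\to A$, $\iota(t)=(t,0)$, be the canonical inclusion, which is an injective MV-homomorphism.

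First I would establish the key lemma: for every finite MV-algebra $B$, the only MV-homomorphism $\varphi:[0,1]\to B$ is the zero map. Indeed, by \cite[Prop.~3.6.5]{C2}, $B\cong\prod_{k=1}^{r}\L_{n_k}$, so $\varphi$ decomposes into coordinate maps $\varphi_k:[0,1]\to\L_{n_k}$. Since $[0,1]$ is simple, $\ker\varphi_k$ is either $\{0\}$ or all of $[0,1]$; the first case would force $\varphi_k$ to be an injection of an infinite set into a finite one, which is impossible. Hence each $\varphi_k$ is zero, and so is $\varphi$.

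Now I would argue by contradiction: assume $A$ is finitely approximable, so that the canonical map $e:A\to\widehat{A}$ is injective (as recalled in the paragraph preceding Proposition \ref{complete}). Equivalently, for every nonzero $a\in A$ there exists a homomorphism $p:A\to B$ with $B$ finite and $p(a)\ne 0$. Apply this to $a=(1/2,0)$ (or any $(t,0)$ with $t\ne 0$). For any such $p$, the composition $p\circ\iota:[0,1]\to B$ is an MV-homomorphism into a finite MV-algebra, and by the lemma above it must vanish. Hence $p(t,0)=p(\iota(t))=0$ for every $t\in[0,1]$, contradicting $p(1/2,0)\ne 0$.

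The main (and really only) obstacle is the verification of the key lemma about homomorphisms from $[0,1]$ into finite MV-algebras, which hinges on the simplicity of $[0,1]$; the rest of the argument is essentially a bookkeeping step that transports the contradiction back to the nonzero element $(1/2,0)\in A$ through the injection $\iota$. No cardinality gymnastics or choice of separating family of ideals is required beyond what the definition of finite approximability supplies.
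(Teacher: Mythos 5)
Your argument has a genuine gap at its very first step: the map $\iota:[0,1]\to[0,1]\times\mathbf{2}$, $\iota(t)=(t,0)$, is \emph{not} an MV-homomorphism. MV-homomorphisms preserve $\neg$ and hence the top element, but $\neg\iota(t)=\neg(t,0)=(1-t,1)$ while $\iota(\neg t)=(1-t,0)$, and $\iota(1)=(1,0)\ne(1,1)$. (In fact no MV-homomorphism $[0,1]\to[0,1]\times\mathbf{2}$ exists, since its second component would be a homomorphism from the simple infinite chain $[0,1]$ into $\mathbf{2}$.) Consequently $p\circ\iota$ need not be an MV-homomorphism, and your key lemma cannot be applied to it. The lemma itself is also misstated for the same reason: the zero map into a nontrivial $B$ sends $1$ to $0$ and so is not a homomorphism; the correct statement is that there is \emph{no} MV-homomorphism from $[0,1]$ into a nontrivial finite MV-algebra (your own case analysis shows this: $\ker\varphi_k=\{0\}$ is impossible by cardinality, and $\ker\varphi_k=[0,1]$ forces $\varphi_k(1)=0=1$ in $\L_{n_k}$).

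The conclusion you are after, namely that $p(t,0)=0$ for every $t$ and every homomorphism $p$ into a finite MV-algebra, is true, but it needs a different justification. The paper gets it by observing that $\ker(p_i\circ\tau)$ is a maximal ideal of $[0,1]\times\mathbf{2}$, that the only maximal ideals are $[0,1]\times\{0\}$ and $\{0\}\times\mathbf{2}$, and that the second is excluded because the quotient by it is the infinite algebra $[0,1]$; hence every separating family of finite quotients kills the whole slice $[0,1]\times\{0\}$. Alternatively, your idea can be repaired by working with the idempotent $u=(1,0)$: the interval $[0,u]=[0,1]\times\{0\}$ is itself an MV-algebra isomorphic to $[0,1]$, $p$ restricts to a homomorphism $[0,u]\to[0,p(u)]$ onto a finite (hence, by simplicity of $[0,1]$, trivial) interval, giving $p(u)=0$ and so $p(t,0)=0$ for all $t$. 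Either repair restores the contradiction with injectivity at $(1/2,0)$, but as written the proof does not go through.
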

\begin{proof}
By contradiction suppose that $[0,1]\times \mathbf{2}$ is finitely approximable. Then there exists finite MV-chains $\L_{n_i}$; $i\in I$ and a one-to-one homomorphism $\tau:[0,1]\times \mathbf{2} \to \prod_{i\in I}\L_{n_i}$. For each $i\in I$, let $p_i$ denotes the natural projection $\prod_{i\in I}\L_{n_i} \to \L_{n_i}$, and consider $\phi_i=p_i\circ \tau$. Then each $\phi_i$ is a homomorphism from $[0,1]\times \mathbf{2}\to [0,1]$ and it follows that $ker \phi_i$ is a maximal ideal of $[0,1]\times \mathbf{2}$. But, $[0,1]\times \mathbf{2}$ has exactly two maximal ideals: $[0,1]\times \{0\}$ and $\{0\}\times \mathbf{2}$. Note that it is not possible to have $ker \phi_i=\{0\}\times \mathbf{2}$, for this would imply by the homomorphism theorem that $Im \phi_i \cong [0,1]\times \mathbf{2}/ker \phi_i\cong [0,1]$. And this would contradict the fact that $Im \phi_i$ is finite as it is a sub-MV-algebra of $\L_{n_i}$. Therefore, $ker \phi_i=[0,1]\times \{0\}$ for all $i\in I$. Thus, for every $t\in [0,1]$, and every $i\in I$, $\phi_i(t,0)=0$, that is $p_i(\tau(t,0))=0$. Hence, $\tau(t,0)=0$ and $t=0$, which is contradiction. Hence, $[0,1]\times \mathbf{2}$ is not finitely approximable as claimed.
\end{proof}
\begin{cor}\label{pro02}
If $A$ is the direct product of MV-chains, among which $[0,1]$, then $A$ is not finitely approximable.
\end{cor}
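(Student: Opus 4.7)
The plan is to generalize the argument of the previous proposition. Write $A \cong [0,1] \times B$, where $B$ is the direct product of the remaining MV-chains, and suppose for contradiction that $A$ is finitely approximable. Since every finite MV-algebra is a direct product of finite MV-chains, this yields a separating family of homomorphisms $\phi_i : A \to \L_{n_i}$. I would show that every such $\phi_i$ annihilates each element of the form $(t, 0_B) \in A$; then the elements $(t, 0_B)$ and $0_A$ cannot be separated whenever $t \in (0,1]$, contradicting the separating hypothesis.

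To establish the annihilation claim, fix one such $\phi$ and examine the Boolean element $e = (1, 0_B) \in A$. Since $e \oplus e = e$, the image $\phi(e)$ is an idempotent of $\L_n$, hence equal to $0$ or $1$. If $\phi(e) = 0$, then because $(t, 0_B) \leq e$ and MV-homomorphisms preserve order, one gets $\phi((t, 0_B)) = 0$ for all $t$, as required. If $\phi(e) = 1$, then $\phi(\neg e) = 0$; the same argument applied to $\neg e = (0, 1_B)$ yields $\phi((0, b)) = 0$ for every $b \in B$, and using $(t, b) = (t, 0_B) \oplus (0, b)$ one concludes that $\phi$ factors as $\phi = \bar{\phi} \circ \pi_1$ for some $\bar{\phi} : [0,1] \to \L_n$ with $\bar{\phi}(1) = 1$. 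But $[0,1]$ is simple, so any nonzero $\bar{\phi}$ would be injective, which is impossible on cardinality grounds. This rules out the second case, leaving only the first.

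Combining these observations, every member of the separating family vanishes on $(t, 0_B)$, so the family cannot separate $(t, 0_B)$ from $0_A$ for $t \in (0,1]$, contradicting finite approximability. The main obstacle is conceptually the same point already isolated in the preceding proposition, namely the non-existence of nonzero MV-homomorphisms from $[0,1]$ into any finite MV-chain; the rest is bookkeeping with the product decomposition and the observation that killing one of two complementary Boolean elements forces a homomorphism to factor through the opposite projection.
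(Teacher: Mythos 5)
Your proof is correct, but it takes a different route from the paper's. The paper handles this corollary by a reduction: it exhibits inside $A=\prod_{i\in I}C_i$ (with $C_{i_0}=[0,1]$) the sub-MV-algebra $S_{i_0}$ of elements that are constantly $0$, or constantly $1$, off the coordinate $i_0$, observes that $S_{i_0}\cong [0,1]\times \mathbf{2}$, and then invokes the preceding proposition together with the fact that finite approximability is inherited by subalgebras. You instead rerun the argument directly on the whole product $A\cong[0,1]\times B$: the idempotent $e=(1,0_B)$ must go to $0$ or $1$ under any homomorphism $\phi:A\to \L_n$; the case $\phi(e)=1$ forces $\phi$ to factor through the first projection and hence gives an embedding of the simple algebra $[0,1]$ into a finite chain, which is impossible; so every $\phi$ in a separating family kills the slice $(t,0_B)$, contradicting separation. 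Your Boolean-element dichotomy replaces the paper's classification of the maximal ideals of $[0,1]\times\mathbf{2}$ (which is what makes the paper's previous proposition work only for the two-factor case and necessitates the subalgebra trick here), so your argument is self-contained and in fact subsumes that proposition rather than relying on it. The paper's version is shorter given the machinery already established; yours is more robust, since it never needs to identify the maximal ideals of the ambient product. Both hinge on the same essential obstruction, namely that simplicity plus cardinality forbids any MV-homomorphism $[0,1]\to\L_n$.
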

\begin{proof}
Every such MV-algebra contains a sub-MV-algebra isomorphic to $[0,1]\times \mathbf{2}$. In fact, suppose that $A=\prod_{i\in I}C_i$, where $C_{i_0}=[0,1]$ for some $i_0\in I$. Let $S_{i_0}=\{f\in A:f(i)=0, \text{for all}\;  i\ne i_0\}\cup\{f\in A:f(i)=1, \text{for all}\;  i\ne i_0\}$. Then $S_{i_0}$ is a sub-MV-algebra of $A$, that is clearly isomorphic to $[0,1]\times \mathbf{2}$.
\end{proof}
The next result offers a simple algebraic characterizations of profinite MV-algebras.
\begin{thm}\label{MV-case}
For every non-trivial MV-algebra $A$, the following assertions are equivalent:\\
 (1) $A$ is profinite\\
 (2) $A$ is complete and finitely approximable\\
 (3) $A$ is isomorphic to the direct product of finite MV-chains.\\
\end{thm}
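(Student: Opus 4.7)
The plan is to establish the cyclic implications (3) $\Rightarrow$ (1) $\Rightarrow$ (2) $\Rightarrow$ (3). The implication (3) $\Rightarrow$ (1) is immediate: a product $\prod_{i \in I} \L_{n_i}$ is the inverse limit of its finite sub-products $\prod_{i \in F} \L_{n_i}$ over finite $F \subseteq I$ with canonical projections, and each such sub-product is a finite MV-algebra. The implication (1) $\Rightarrow$ (2) is Proposition \ref{complete}.

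The heart of the argument is (2) $\Rightarrow$ (3). Let $A$ be complete and finitely approximable. Finite approximability, combined with the fact that every finite MV-algebra is a finite direct product of finite MV-chains, yields a subdirect embedding $\iota : A \hookrightarrow \prod_{j \in J} \L_{n_j}$; writing $\pi_j$ for the composed projections, each $M_j := \ker \pi_j$ is a maximal ideal of $A$ with quotient $\L_{n_j}$. My plan is to show this embedding is an isomorphism by constructing in $A$ an orthogonal system of Boolean idempotents $\{e_j\}_{j \in J}$ realizing the coordinate decomposition. Using completeness, define $e_j := \bigvee \{a \in A : \pi_k(a) = 0 \text{ for all } k \neq j\}$. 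The required verifications are: (i) each $e_j$ is a Boolean element of $A$, (ii) $\pi_j(e_j)$ is the top of $\L_{n_j}$ while $\pi_k(e_j) = 0$ for $k \neq j$, and (iii) $\bigvee_j e_j = 1$. Granted these, the standard decomposition of a complete MV-algebra along its orthogonal Boolean idempotents yields $A \cong \prod_{j \in J} [0, e_j] \cong \prod_{j \in J} \L_{n_j}$.

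The principal obstacle is that the claimed behavior of $e_j$ under $\pi_k$ hinges on each maximal ideal $M_k$ being closed under arbitrary suprema; in a general complete MV-algebra, maximal ideals need not be sup-closed (Lemma \ref{notpro} exhibits exactly this failure in $[0,1]^X$). The key insight is that finite approximability, together with completeness, precludes this pathology for the $M_j$'s: any obstruction to sup-closure would, by a structural argument parallel to Lemma \ref{notpro}, produce a direct factor of $A$ containing a subalgebra of type $[0,1]^X$ or $[0,1] \times \mathbf{2}$, contradicting Proposition \ref{pro01} or Corollary \ref{pro02}. This is precisely the role for which the preliminary pathology-exclusion results have been set up, and their combination with completeness should force each $M_j$ to be sup-closed, from which the full direct-product decomposition follows.
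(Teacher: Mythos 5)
Your outline for $(3)\Rightarrow(1)$ and $(1)\Rightarrow(2)$ matches the paper, but your argument for $(2)\Rightarrow(3)$ contains a genuine gap, and in fact its central claim is false. You assert that completeness together with finite approximability forces each kernel $M_j=\ker\pi_j$ of the subdirect representation to be closed under arbitrary suprema, and hence that each $e_j$ behaves as a coordinate idempotent. Take $A=\mathbf{2}^X$ for $X$ infinite: this algebra is complete and finitely approximable (it already \emph{is} a product of finite chains), yet a perfectly legitimate subdirect embedding into a product of finite chains is $A\hookrightarrow\prod_{M\in Max(A)}A/M$, indexed by \emph{all} maximal ideals, i.e.\ all ultrafilters on $X$. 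For a non-principal ultrafilter $U$ the ideal $M_U$ is not sup-closed (it contains every $\chi_{\{x\}}$ but not their supremum $1$), the only element vanishing at every coordinate $V\neq U$ is $0$, so your $e_U=0$ and condition (ii) fails; moreover the embedding is very far from surjective, so the statement "this embedding is an isomorphism" cannot be the right target. The same diagonal embedding $\mathbf{2}\hookrightarrow\mathbf{2}\times\mathbf{2}$ already shows an arbitrary finite approximation need not be onto. What your plan actually requires is to isolate the \emph{good} coordinates (those with $e_j\neq 0$, which correspond to the principal maximal ideals of Lemma \ref{principal}) and then prove $\bigvee_j e_j=1$; that is precisely the hard content of the theorem, and the sentence "a structural argument parallel to Lemma \ref{notpro} ... should force each $M_j$ to be sup-closed" is a hope, not an argument -- and, as the example shows, a false one as stated.

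For comparison, the paper avoids constructing idempotents altogether: it embeds $A$ into a product of finite chains, invokes \cite[Thm. 6.8.1]{C2} to get that this product is complete and completely distributive, deduces that the complete subalgebra $A$ is completely distributive and hence (by the same theorem) a direct product of \emph{complete} MV-chains, and finally uses Corollary \ref{pro02} to rule out $[0,1]$ as a factor, leaving only finite chains. If you want to repair your approach, you would need either to import that structure theorem anyway, or to supply an independent proof that the orthogonal family of minimal nonzero Boolean elements of $A$ is a partition of unity -- neither of which your sketch currently provides.
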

\begin{proof}
(1)$\Rightarrow$ (2): Obvious.\\
(2)$\Rightarrow$ (3):  Suppose that $A$ is a sub-MV-algebra of $\prod_I A_i$, where $A_i$ is a finite MV-algebra. Since each $A_i$ is a finite MV-algebra, then it is isomorphic to a (finite) product of finite MV-chains. So, $\prod_I A_i$ is isomorphic to a direct product of finite MV-chains, and by \cite[Thm. 6.8.1]{C2}, $\prod_I A_i$ is complete and completely distributive. Since $A$ is a complete sub-MV-algebra of $\prod_I A_i$, then $A$ is completely distributive. Therefore, by \cite[Thm. 6.8.1]{C2} again, $A$ is a direct product of complete MV-chains. But, every complete MV-chain is isomorphic to a finite MV-chain, or to $[0,1]$. Moreover, since $A$ is finitely approximable, it follows from Corollary \ref{pro02} that $A$ is a direct product of finite MV-chains. \\
(3)$\Rightarrow$ (1): Is clear.
\end{proof}
It follows that profinite MV-chains are finite.
\begin{cor}\label{MV-chain}
A (non-trivial) MV-chain $A$ is profinite if and only if $A$ is isomorphic to $\L_n$ for some $n\geq 2$.
\end{cor}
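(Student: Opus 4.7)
The plan is to deduce the corollary directly from Theorem \ref{MV-case} together with the elementary observation that a direct product of two or more non-trivial MV-chains is never totally ordered.

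For the ``only if'' direction, suppose $A$ is a profinite MV-chain. By Theorem \ref{MV-case}, there is a family $\{\L_{n_i}\}_{i\in J}$ of finite MV-chains (with each $n_i\geq 2$) and an isomorphism $A\cong \prod_{i\in J}\L_{n_i}$. Since $A$ is non-trivial, $J$ is non-empty. The key observation is that if $|J|\geq 2$, then picking two distinct indices $i_0,i_1\in J$ and considering the elements $a,b\in \prod_{i\in J}\L_{n_i}$ with $a(i_0)=1$, $a(i_1)=0$ and $b(i_0)=0$, $b(i_1)=1$ (and arbitrary coordinates elsewhere) yields two elements that are incomparable in the componentwise order, contradicting the assumption that $A$ is a chain. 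Therefore $|J|=1$, and $A\cong \L_n$ for some $n\geq 2$.

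For the ``if'' direction, note that any finite MV-algebra is trivially profinite: it is the inverse limit of the constant inverse system indexed by a one-element directed set. Hence $\L_n$ is profinite for every $n\geq 2$.

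The argument is essentially immediate from Theorem \ref{MV-case}; the only small point that requires attention is the incomparability observation in the first paragraph, and even this is routine. There is no real obstacle here; the corollary is a straightforward consequence of the characterization theorem.
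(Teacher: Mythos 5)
Your proposal is correct and follows the same route the paper intends: Corollary \ref{MV-chain} is stated as an immediate consequence of Theorem \ref{MV-case}, and you supply exactly the small missing detail (a product of two or more non-trivial chains contains incomparable elements, so the index set must be a singleton) together with the trivial observation that finite algebras are profinite.
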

\section{Maximal ideals of profinite MV-algebras}
For any MV-algebra $A$, let $\mathcal{H}(A)$ denotes the set of MV-algebra homomorphisms from $A$ into $[0,1]$ and $Max(A)$ denotes the set of maximal ideals of $A$. It is well known \cite{CDM} that $\chi\mapsto ker\chi$ defines a one-to-one correspondence between $\mathcal{H}(A)$ and $Max(A)$, where $ker \chi=\{a\in A: \chi(a)=0\}$. We will use the following notations through out the paper. For $A:=\displaystyle\prod_{x\in X}\L_{n_x}$ a profinite MV-algebra, and each $x\in X$, $p_x:A\to \L_{n_x}$ denotes the natural projection. In addition $M_x$ will denote $ker p_x$, it follows that each $M_x$ is a maximal ideal of $A$. It is easy to see that $\displaystyle\oplus_{x\in X}\L_{n_x}:=\left\{f\in A:f(x)=0\; \text{for all, but finitely many}\;  x\in X \right \}$ is an ideal of $A$. Recall that a principal ideal of an MV-algebra $A$ is any ideal $I$ that is generated by a single element, that is there exists $a\in A$, such that $I=\langle a\rangle$. It is well known that $x\in \langle a\rangle$ if and only if $x\leq na$ for some integer $n\geq 1$.
\begin{lem}\label{principal}
Let $A:=\displaystyle\prod_{x\in X}\L_{n_x}$ be a profinite MV-algebra. For any maximal ideal $M$ of $A$, the following conditions are equivalent.\\
(i) $M$ is principal;\\
(ii) There exists a unique $x_0\in I$, such that $M=ker p_{x_0}$;\\
(iii) $M$ does not contain $\oplus_{x\in X}\L_{n_x}$.
\end{lem}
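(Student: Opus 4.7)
The plan is to go around the cycle (ii) $\Rightarrow$ (i) $\Rightarrow$ (iii) $\Rightarrow$ (ii), and then to dispose of the uniqueness clause. For (ii) $\Rightarrow$ (i) I observe that $\neg u_{x_0}$ is Boolean (its only values are $0$ and $1$), so $N(\neg u_{x_0}) = \neg u_{x_0}$ for every integer $N \geq 1$; hence $\langle \neg u_{x_0}\rangle = \{f \in A : f \leq \neg u_{x_0}\} = \{f : f(x_0)=0\} = ker\, p_{x_0}$. For (iii) $\Rightarrow$ (ii), pick $f \in (\oplus_{x\in X}\L_{n_x}) \setminus M$, and note that $\chi_{\mathrm{supp}(f)} \geq f$ also lies outside $M$ by downward closure. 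Writing $\chi_{\mathrm{supp}(f)} = u_{x_1}\vee\cdots\vee u_{x_k}$ and using that maximal MV-ideals are prime, extract $x_0$ with $u_{x_0}\notin M$. If some $g\in M$ had $g(x_0)>0$, then $(n_{x_0}-1)t = 1$ for every nonzero $t \in \L_{n_{x_0}}$ would give $(n_{x_0}-1)(g\wedge u_{x_0}) = u_{x_0} \in M$, contradicting $u_{x_0}\notin M$; thus $M \subseteq ker\, p_{x_0}$, and maximality gives equality. Uniqueness then follows because $u_x(x)=1$ forbids $u_x \in ker\, p_x$, so distinct coordinates yield distinct maximal ideals.

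The crucial implication (i) $\Rightarrow$ (iii) I would prove contrapositively. Assume $\oplus_{x \in X} \L_{n_x} \subseteq M = \langle a\rangle$. Every $u_x \in M$ forces $u_x \leq k_x a$ pointwise, so $a(x) > 0$ for every $x \in X$. Properness of $M$ means $Na \neq 1$ for all $N \geq 1$, so for each $N$ some $x$ satisfies $a(x)<1/N$; combined with $a>0$ pointwise this forces $\{x \in X : a(x) < \varepsilon\}$ to be infinite for every $\varepsilon > 0$. Pick distinct $x_n \in X$ with $a(x_n) < 2^{-n}$, and define $b \in A$ by taking $b(x_n)$ to be the least element of $\L_{n_{x_n}}$ at or above $\sqrt{a(x_n)}$, and $b(x) = a(x)$ off the sequence. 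Then $b \geq a$, $b(x_n) \to 0$, and $b(x_n)/a(x_n) \geq 1/\sqrt{a(x_n)} \to \infty$, so $b \not\leq Na$ for any $N$; hence $b \notin \langle a\rangle = M$. Maximality of $M$ now produces $m \in M$ and $N \geq 1$ with $Nb \oplus m = 1$; writing $m \leq Ka$ for some $K$ yields $Nb(x) + Ka(x) \geq 1$ in $[0,1]$ for every $x$. Evaluated at $x_n$ the left side is at most $2N\sqrt{a(x_n)} + K a(x_n) \to 0$, the desired contradiction.

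The principal obstacle is precisely the construction of $b$ above: it must live in the discrete chains $\L_{n_x}$ and simultaneously escape $\langle a \rangle$ (by growing like $\sqrt{a}$ along $(x_n)$) while still decaying to zero (so that no MV-combination $Nb \oplus Ka$ can pointwise reach $1$ along the sequence). Everything else is more formal, resting on primeness of maximal MV-ideals, the closure of ideals under $\oplus$, and the Boolean nature of $\neg u_{x_0}$.
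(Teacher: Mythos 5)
Your proof is correct, but for the one non-trivial implication it takes a genuinely different route from the paper. The paper's pivot is (i)$\Rightarrow$(ii): assuming $M=\langle a\rangle$ with $a(x)>0$ everywhere, it splits into the case where $\{(n_x-1)/k_{n_x}\}$ is bounded (then $ma=1$, contradicting properness) and the unbounded case, where it partitions $X$ into two blocks on each of which the quantity is unbounded, builds $f,g$ with $f\wedge g=a$, and uses primeness of $M$ to force a bounded block --- a lattice-theoretic splitting argument. You instead attack (i)$\Rightarrow$(iii) by contradiction with an archimedean estimate: from $\oplus_{x}\L_{n_x}\subseteq\langle a\rangle$ you get $a>0$ pointwise, from properness you extract a sequence $x_n$ with $a(x_n)<2^{-n}$, and the ``square-root'' element $b$ (rounded up to the grid $\L_{n_{x_n}}$) escapes $\langle a\rangle$ while decaying fast enough that $Nb\oplus Ka$ cannot reach $1$ along the sequence. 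This trades the paper's combinatorial use of primeness for an analytic growth-rate argument; your version arguably isolates more transparently *why* a principal maximal ideal cannot swallow the direct sum, at the cost of the slightly fussy verification that the rounding error is controlled --- you should note explicitly that $a(x_n)\geq 1/(n_{x_n}-1)$ (since $a(x_n)$ is a nonzero element of $\L_{n_{x_n}}$), which is what makes $b(x_n)\leq\sqrt{a(x_n)}+1/(n_{x_n}-1)\leq 2\sqrt{a(x_n)}$ and hence your final estimate legitimate. The remaining implications ((ii)$\Rightarrow$(i) via the Boolean generator $\neg u_{x_0}$, (iii)$\Rightarrow$(ii) by locating a coordinate $x_0$ with $u_{x_0}\notin M$ and showing $M\subseteq\ker p_{x_0}$, plus uniqueness) match the paper in substance, modulo your proving (iii)$\Rightarrow$(ii) directly where the paper argues contrapositively; your appeal to primeness there is unnecessary, since $u_{x_1}\vee\cdots\vee u_{x_k}\leq u_{x_1}\oplus\cdots\oplus u_{x_k}$ already shows an ideal containing all the $u_{x_i}$ contains their join.
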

\begin{proof}
$(i)\Rightarrow (ii)$: Suppose that $M$ is principal, then $M=\langle a\rangle $ for some $a\in A$. We claim that there exists $x_0\in X$ with $a(x_0)=0$. By contradiction suppose that $a(x)\ne 0$ for all $x\in X$, then for each $x\in X$, $a(x)=\dfrac{k_{n_x}}{n_x-1}$ for some $1\leq k_{n_x}\leq n_x-1$.\\
We consider two cases:\\
(a) $\left\{\dfrac{n_x-1}{k_{n_x}}\right\}_{x\in X}$ is bounded, then there exits an integer $m\geq 1$ such that $\dfrac{n_x-1}{k_{n_x}}\leq m$ for all $x\in X$. It follows that $ma=1$, and so $M=A$, which is a contradiction.\\
(b) $\left\{\dfrac{n_x-1}{k_{n_x}}\right\}_{x\in X}$ is unbounded. Then $\left\{n_x\right\}_{x\in X}$ is unbounded. We can write $X$ as the disjoint union of of two sets $X'$ and $X''$ such that $\left\{n_x\right\}_{x\in X'}$ and $\left\{n_x\right\}_{x\in X''}$ are unbounded. Define $f,g\in A$ by:\\
$f(x)=
\left\{\begin{array}{ll}
  1 & ,\ \ \mbox{if} \ \ x\in X'\\
 a(x)&  ,\ \ \mbox{if} \ \ x\in X''
\end{array}\right.$
and \ \ \ 
$g(x)=
\left\{\begin{array}{ll}
  a(x) & ,\ \ \mbox{if} \ \ x\in X'\\
 1&  ,\ \ \mbox{if} \ \ x\in X''
\end{array}\right.$\\
Then $f\wedge g=a$, in particular $f\wedge g\in M$. Since $M$ is prime, as every maximal ideal is, then $f\in M$ or $g\in M$. Assume $f\in M=\langle a\rangle$, then there exists an integer $r\geq 1$ such that $f\leq ra$. Therefore, $1\leq \dfrac{rk_{n_x}}{n_x-1}$ for all $x\in X"$, and so $\dfrac{n_x-1}{k_{n_x}}\leq r$ for all $x\in X"$. This contradicts the fact that $\left\{n_x\right\}_{x\in X''}$ is unbounded. In a similar argument, $g\in M$ would contradict the fact that $\left\{n_x\right\}_{x\in X'}$ is unbounded.\\
Thus $a(x_0)=0$ for some $x_0\in X$. For every $f\in M=\langle a\rangle$, there exists $k\geq 1$ such that $f\leq ka$, and it follows that $f(x_0)=0$ for all $f\in M$. Hence, $M\subseteq M_{x_0}$. Since $M$ and $M_{x_0}$ are maximal, then $M=M_{x_0}=ker p_{x_0}$. The uniqueness is clear.\\
$(ii)\Rightarrow (i)$ This is clear as each $M_{x_0}$ is principal as it is generated by $f(x)=
\left\{\begin{array}{ll}
  0 & ,\ \ \mbox{if} \ \ x=x_0\\
 1&  ,\ \ \mbox{if} \ \ x\ne x_0
\end{array}\right.$\\

$(ii) \Rightarrow (iii)$: Suppose that there exists a unique $x_0\in I$, such that $M=ker p_{x_0}$. Consider $f\in A$ defined by $f(x)=
\left\{\begin{array}{ll}
  1 & ,\ \ \mbox{if} \ \ x=x_0\\
 0&  ,\ \ \mbox{if} \ \ x\ne x_0
\end{array}\right.$
Then $f\in \displaystyle\oplus_{x\in X}\L_{n_x}$ and $f\notin M$.\\
$(iii)\Rightarrow (ii)$ Suppose that for all $x\in X$, $M\ne M_x$. For each $x\in X$, let $b_x\in A$ defined by $b_x(t)=
\left\{\begin{array}{ll}
  0 & ,\ \ \mbox{if} \ \ t=x\\
 1&  ,\ \ \mbox{if} \ \ t\ne x
\end{array}\right.$\\
Then for every $x\in X$, since $M_x=\langle b_x\rangle$, then $b_x\notin M$ and since $M$ is maximal, by \cite[Prop.1.2.2]{C2} there exists an integer $k_x\geq 1$ such that $\neg k_xb_x=\neg b_x\in M$. It follows that $M$ contains $\displaystyle\oplus_{x\in X}\L_{n_x}$.
\end{proof}
\begin{cor}
Let $A:=\displaystyle\prod_{x\in X}\L_{n_x}$ be a profinite MV-algebra. A maximal ideal $M$ of $A$ is not principal if and only if $\displaystyle\oplus_{x\in X}\L_{n_x} \subseteq M$.
\end{cor}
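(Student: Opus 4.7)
The plan is to observe that this corollary is simply the contrapositive of the equivalence $(i) \Leftrightarrow (iii)$ established in Lemma \ref{principal}. Specifically, that lemma already shows that for any maximal ideal $M$ of $A = \prod_{x\in X}\L_{n_x}$, the conditions "$M$ is principal" and "$M$ does not contain $\oplus_{x\in X}\L_{n_x}$" are equivalent.

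Therefore, I would proceed by writing a two-line deduction: first invoke Lemma \ref{principal}, which gives $M$ principal $\iff$ $\oplus_{x\in X}\L_{n_x}\not\subseteq M$. Then negate both sides of the biconditional to obtain $M$ not principal $\iff$ $\oplus_{x\in X}\L_{n_x}\subseteq M$, which is exactly the statement to be proved.

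Since no new construction is needed, there is no real obstacle. The only care required is to make sure one is citing the $(i)\Leftrightarrow(iii)$ portion of Lemma \ref{principal} (and not reintroducing condition $(ii)$, which is not used in the statement of the corollary). In effect, the work has already been done by proving the cycle $(i)\Rightarrow(ii)\Rightarrow(iii)\Rightarrow(ii)\Rightarrow(i)$ in the lemma, and the corollary just rephrases one edge of that cycle in its negated form to emphasize the role of the ideal $\oplus_{x\in X}\L_{n_x}$ as the obstruction to principality.
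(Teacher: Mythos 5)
Your proposal is correct and matches the paper's treatment: the corollary is stated there without proof precisely because it is the negated form of the equivalence $(i)\Leftrightarrow(iii)$ of Lemma \ref{principal}, which is exactly the two-line deduction you give. No gap.
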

\section{A Stone type duality}
We say that a homomorphism $\varphi:A\to B$ of MV-algebras reflect principal ideals if for every principal ideal $J$ of $B$, $\varphi^{-1}(J)$ is a principal ideal of $A$. It is clear that the identity reflects principal maximal ideals, and that the composition of two homomorphisms that reflect principal maximal ideals also reflects principal maximal ideals. Let  $\mathbb{P}$ denotes the category of profinite MV-algebras and homomorphisms that reflect principal maximal ideals. We recall the definition of the category $\mathbb{M}$ of multisets. A multiset is a pair $\langle X, \sigma :X\to \mathbb{N}\rangle$, where $X$ is a set and $\sigma$ is a map. Given two multisets $\langle X, \sigma \rangle$ and $\langle Y, \mu \rangle$, a morphism from $\langle X, \sigma \rangle$ to $\langle Y, \mu \rangle$ is a map $\varphi: X\to Y$ such that $\mu(\varphi(x))$ divides $\sigma(x)$ for all $x\in X$.\\
We shall define two functors $\mathcal{H}: \mathbb{P}^{op}\to \mathbb{M}$ and $\mathcal{F}: \mathbb{M}\to \mathbb{P}^{op}$
\begin{itemize}
\item[(1)] $\mathcal{H}: \mathbb{P}^{op}\to \mathbb{M}$. For any profinite MV-algebra $A$, set $$\mathcal{H}_F(A):=\left\{\chi:A\to [0,1]:\chi \; \text{is a homomorphism and}\; ker\chi \; \text{is principal (maximal) ideal}\right\}$$ and $\sigma_A:\mathcal{H}_F(A)\to \mathbb{N}$ defined by $\sigma_A(\chi)=\#\chi(A)-1$. \\ Note that $\sigma_A$ is well-defined because as $ker\chi$ is principal, by Lemma \ref{principal} and the homomorphism theorem, $\chi(A)$ is finite.
\begin{itemize}
\item On objects: Given a profinite MV-algebra $A$, define $\mathcal{H}(A)=\langle \mathcal{H}_F(A), \sigma_A\rangle$.
\item On morphisms: let $\varphi$ be a homomorphism in $\mathbb{P}^{op}$ from $A\to B$, that is $\varphi:B\to A$ is an MV-algebra homomorphism that reflects principal ideals. Define $\mathcal{H}(\varphi): \mathcal{H}_F(A) \to \mathcal{H}_F(B)$ by $\mathcal{H}(\varphi)(\chi)=\chi \circ \varphi$. Note that since $ker\chi(A)$ is a principal maximal ideal of $A$, $ker(\chi \circ \varphi)=\varphi^{-1}(ker\chi)$, and $\varphi$ reflects principal maximal ideals, then $ker(\chi \circ \varphi)$ is principal maximal. So, $\chi \circ \varphi\in \mathcal{H}_F(B)$ and $\mathcal{H}(\varphi)$ is well-defined. On the other hand, note by \cite[Cor. 3.5.4, Cor. 7.2.6]{C2} that for each $\chi\in \mathcal{H}_F(A)$, $\chi(A)=\L_{\#\chi(A)}$. Thus, $\L_{\#(\chi \circ \varphi)(A)}\subseteq \L_{\#\chi(A)}$, and it follows that $\#(\chi \circ \varphi)(A)-1$ divides $\#\chi(A)-1$. Thus, $\sigma_B(\mathcal{H}(\varphi)(\chi))$ divides $\sigma_A(\chi)$ for all $\chi\in \mathcal{H}_F(A)$. Therefore, $\mathcal{H}(\varphi)$ is a morphism in $\mathbb{M}$ from $\mathcal{H}_F(A) \to \mathcal{H}_F(B)$.\\
\end{itemize}
\item[(2)] $\mathcal{F}: \mathbb{M}\to \mathbb{P}^{op}$. For any multiset $\langle X, \sigma\rangle$, $\displaystyle\prod_{x\in X}\L_{\sigma(x)+1}$ is clearly a profinite MV-algebra, that shall be denoted by $A_{X,\sigma}$.
\begin{itemize}
\item On objects: Given a multiset $\langle X, \sigma\rangle$, define $\mathcal{F}(\langle X, \sigma\rangle):=A_{X,\sigma}$.
\item On morphisms: Let $\varphi: \langle X, \sigma\rangle \to \langle Y, \mu\rangle$ be a morphism in $\mathbb{M}$. Define $\mathcal{F}(\varphi):A_{Y,\mu}\to A_{X,\sigma}$ by $\mathcal{F}(\varphi)(f)(x)=f(\varphi(x))$ for all $f\in A_{Y,\mu}$ and all $x\in X$. To see that $\mathcal{F}(\varphi)$ is well-defined, first note that for all $f\in A_{Y,\mu}$ and all $x\in X$, $f(\varphi(x))\in \L_{\mu(\varphi(x))+1}$. On the other hand, $\mu(\varphi(x))$ divides $\sigma(x)$, hence $ \L_{\mu(\varphi(x))+1}\subseteq \L_{\sigma{x}+1}$. Thus, $f(\varphi(x))\in \L_{\sigma{x}+1}$. In addition, let $M$ be a principal maximal ideal of $A_{X,\sigma}$, then by Lemma \ref{principal}, there exists $x_0\in X$ such that $M=M_{x_0}$. It is easy to see that $\mathcal{H}(\varphi)^{-1}(M_{x_0})=M_{\varphi(x_0)}$, which is a principal maximal ideal of $A_{Y,\mu}$. Finally, it is easy to see that $\mathcal{H}(\varphi)$ is a MV-homomorphism from $A_{Y,\mu} \to A_{X,\sigma}$.\\
\end{itemize}
\end{itemize}
The only missing aspects of the proof of the following results are simple computations, which we shall omit. 
\begin{prop}
$\mathcal{H}: \mathbb{P}^{op}\to \mathbb{M}$ and $\mathcal{F}: \mathbb{M}\to \mathbb{P}^{op}$ are functors.
\end{prop}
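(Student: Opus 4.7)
The plan is to verify the two functor axioms---preservation of identities and preservation of composition---for each of $\mathcal{H}$ and $\mathcal{F}$, since the assignments on objects and morphisms, together with the fact that the morphism values land in the correct hom-sets, have already been established in the discussion preceding the statement.

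For $\mathcal{H}$, I would first dispatch identities via the trivial computation $\mathcal{H}(\mathrm{id}_A)(\chi) = \chi \circ \mathrm{id}_A = \chi$. For composition, let $\varphi \colon A \to B$ and $\psi \colon B \to C$ be morphisms in $\mathbb{P}^{op}$, realized as MV-homomorphisms $\varphi \colon B \to A$ and $\psi \colon C \to B$ in $\mathbb{P}$, whose composite in $\mathbb{P}^{op}$ corresponds to the composite $\varphi \circ \psi \colon C \to A$ taken in $\mathbb{P}$. Then for every $\chi \in \mathcal{H}_F(A)$,
\[
\mathcal{H}(\psi \circ_{\mathbb{P}^{op}} \varphi)(\chi) \;=\; \chi \circ (\varphi \circ \psi) \;=\; (\chi \circ \varphi) \circ \psi \;=\; \mathcal{H}(\psi)\bigl(\mathcal{H}(\varphi)(\chi)\bigr),
\]
so $\mathcal{H}$ sends composition to composition by associativity of function composition.

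For $\mathcal{F}$, I would first confirm that $\mathcal{F}(\varphi)$ is a morphism in $\mathbb{P}$: it is an MV-homomorphism because the operations on $A_{X,\sigma} = \prod_{x \in X}\L_{\sigma(x)+1}$ are computed coordinate-wise and precomposition by $\varphi$ commutes with each coordinate operation; the reflection of principal maximal ideals reduces, via Lemma~\ref{principal}, to the identification $\mathcal{F}(\varphi)^{-1}(M_{x_0}) = M_{\varphi(x_0)}$ already noted in the construction. Identity preservation is immediate from $\mathcal{F}(\mathrm{id}_X)(f)(x) = f(x)$. For composition, given $\varphi \colon \langle X,\sigma\rangle \to \langle Y,\mu\rangle$ and $\psi \colon \langle Y,\mu\rangle \to \langle Z,\nu\rangle$ in $\mathbb{M}$, the equality
\[
\mathcal{F}(\psi \circ \varphi)(f)(x) \;=\; f\bigl(\psi(\varphi(x))\bigr) \;=\; \bigl(\mathcal{F}(\varphi) \circ_{\mathbb{P}} \mathcal{F}(\psi)\bigr)(f)(x)
\]
yields, after reversing arrow direction to pass to $\mathbb{P}^{op}$, that $\mathcal{F}(\psi \circ \varphi) = \mathcal{F}(\psi) \circ_{\mathbb{P}^{op}} \mathcal{F}(\varphi)$.

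The only real obstacle is not algebraic content but bookkeeping: one must keep straight that a morphism of $\mathbb{P}^{op}$ from $A$ to $B$ is an MV-homomorphism $B \to A$ reflecting principal maximal ideals, and that the composite $\psi \circ \varphi$ in $\mathbb{P}^{op}$ is the opposite composite in $\mathbb{P}$. Once this contravariance is carefully tracked, both functor axioms collapse to associativity of ordinary function composition, which is precisely the \emph{simple computation} the author elects to omit.
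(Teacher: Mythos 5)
Your proposal is correct and follows exactly the route the paper intends: the author explicitly omits these verifications as ``simple computations,'' relying on the preceding discussion for well-definedness, and your argument supplies precisely those omitted checks (identities and composition, with the contravariance of $\mathbb{P}^{op}$ tracked correctly). Nothing is missing.
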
 
\begin{prop}
Let $\langle X, \sigma \rangle$ be a multiset, define $\eta_X:\langle X, \sigma \rangle \to \langle \mathcal{H}_F(A_{X,\sigma}),\sigma_{A_{X,\sigma}}\rangle$ by $\eta_X(x)(f)=f(x)$, for all $x\in X$ and all $f\in A_{X,\sigma}$.\\
Then $\eta_X$ is an isomorphism in $\mathbb{M}$.
\end{prop}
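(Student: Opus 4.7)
The plan is to verify three things: (a) $\eta_X$ is well-defined and is a morphism in $\mathbb{M}$; (b) $\eta_X$ is a bijection of underlying sets; (c) its inverse is again a morphism in $\mathbb{M}$.

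For (a), fix $x\in X$. The map $\eta_X(x)\colon A_{X,\sigma}\to [0,1]$ is just the natural projection $p_x$ composed with the inclusion $\L_{\sigma(x)+1}\hookrightarrow [0,1]$, hence is an MV-homomorphism. Its kernel is $M_x$, which by Lemma \ref{principal} is a principal maximal ideal, so $\eta_X(x)\in \mathcal{H}_F(A_{X,\sigma})$. Since $p_x$ is surjective onto $\L_{\sigma(x)+1}$, the image $\eta_X(x)(A_{X,\sigma})$ has exactly $\sigma(x)+1$ elements, giving $\sigma_{A_{X,\sigma}}(\eta_X(x))=\sigma(x)$. In particular $\sigma_{A_{X,\sigma}}(\eta_X(x))$ divides $\sigma(x)$, so $\eta_X$ is a morphism in $\mathbb{M}$.

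For (b), injectivity is immediate: given distinct $x,y\in X$, take $b_y\in A_{X,\sigma}$ with $b_y(y)=0$ and $b_y(t)=1$ for $t\neq y$ (the generator of $M_y$ used in Lemma \ref{principal}); then $\eta_X(x)(b_y)=1\neq 0=\eta_X(y)(b_y)$. For surjectivity, let $\chi\in \mathcal{H}_F(A_{X,\sigma})$. Since $\ker \chi$ is a principal maximal ideal, Lemma \ref{principal} supplies a unique $x_0\in X$ with $\ker \chi = M_{x_0} = \ker \eta_X(x_0)$, and the one-to-one correspondence $\chi\mapsto \ker \chi$ between $\mathcal{H}(A_{X,\sigma})$ and $\Max(A_{X,\sigma})$ recalled at the start of Section 3 then forces $\chi = \eta_X(x_0)$.

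Finally, for (c), the inverse $\eta_X^{-1}$ is a morphism in $\mathbb{M}$ because part (a) produced the equality $\sigma_{A_{X,\sigma}}(\eta_X(x))=\sigma(x)$, not merely divisibility; reading it backwards yields $\sigma(\eta_X^{-1}(\chi))=\sigma_{A_{X,\sigma}}(\chi)$, which divides itself trivially. The only substantive step is the surjectivity in (b), and the real work there has already been absorbed into Lemma \ref{principal}; the remainder is straightforward bookkeeping.
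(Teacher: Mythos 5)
Your proof is correct and follows essentially the same route as the paper's: well-definedness via the projections $p_x$, injectivity via the characteristic-type element vanishing at a single coordinate, and surjectivity via Lemma \ref{principal} together with the bijection $\chi\mapsto\ker\chi$ between $\mathcal{H}(A)$ and $\Max(A)$. The one place where you do more than the paper is worth highlighting: the paper only establishes the divisibility $\sigma_{A_{X,\sigma}}(\eta_X(x))\mid\sigma(x)$ and bijectivity, and then declares $\eta_X$ an isomorphism, whereas in $\mathbb{M}$ a bijective morphism is an isomorphism only if the inverse also satisfies the divisibility condition, which forces the exact equality $\sigma_{A_{X,\sigma}}(\eta_X(x))=\sigma(x)$. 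You obtain that equality directly from the surjectivity of $p_x$ onto $\L_{\sigma(x)+1}$ and use it to check that $\eta_X^{-1}$ is a morphism, thereby closing a small gap that the paper's proof leaves implicit.
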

\begin{proof}
Note that for each $x\in X$, $\eta_X(x)$ is a homomorphism from $A_{X,\sigma}\to \L_{\sigma(x)+1}$, in particular $\eta_X(x)\in \mathcal{H}_F(A_{X,\sigma})$ and $\eta_X$ is well-defined. To see that $\eta_X$ is a morphism, let $x\in X$, then $\eta_X(x)(A_{X,\sigma})\subseteq \L_{\sigma(x)+1}$. Thus, $\L_{\#\eta_X(x)(A_{X,\sigma})}\subseteq \L_{\sigma(x)+1}$, hence $\#\eta_X(x)(A_{X,\sigma})-1$ divides $\sigma(x)$. Whence, $\sigma_{A_{X,\sigma}}(\eta_X(x))$ divides $\sigma(x)$ for all $x\in X$.\\
 It remains to prove that $\eta_X$ is bijective.\\
Injectivity: Let $x_1,x_2\in X$ such that $x_1\ne x_2$. Define $f\in A_{X,\sigma}$ by $f(x_1)=0$ and $f(x)=1$ for $x\ne x_1$. Then $\eta_X(x_1)(f)=0$, while $\eta_X(x_2)(f)=1$. Therefore $\eta_X(x_1)\ne \eta_X(x_2)$ and $\eta_X$ is injective.\\
Surjectivity: Let $\chi \in \mathcal{H}_F(A_{X,\sigma})$, then $ker\chi$ is a principal maximal ideal of $A_{X,\sigma}$. By Lemma \ref{principal}, there exists $x\in X$ such that $ker \chi=M_x=kerp_x$. Hence, $\chi=p_x$, and it follows that $\eta_X(x)=\chi$.\\
Thus, $\eta_X$ is an isomorphism in $\mathbb{M}$.
\end{proof}
\begin{prop}
Let $A$ be a profinite MV-algebra. Define $\varepsilon_A: A\to \displaystyle\prod_{\chi\in \mathcal{H}_F(A)}\L_{\#\chi(A)}$ by $\varepsilon_A(f)(\chi)=\chi(f)$
for all $f\in A$ and all $\chi\in  \mathcal{H}_F(A)$.\\
Then $\varepsilon_A$ is an isomorphism in $\mathbb{P}^{op}$.
\end{prop}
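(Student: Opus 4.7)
My plan is to reduce the claim to the structural decomposition supplied by Theorem \ref{MV-case} and the classification of principal maximal ideals in Lemma \ref{principal}. By Theorem \ref{MV-case}, $A$ is (isomorphic to) $\prod_{x\in X}\L_{n_x}$ for some family $\{n_x\}_{x\in X}$. The classical bijection $\chi \mapsto \ker \chi$ between $\mathcal{H}(A)$ and $Max(A)$, combined with Lemma \ref{principal}, identifies $\mathcal{H}_F(A)$ with the set $\{p_x : x\in X\}$ of canonical projections; under this identification $\#p_x(A)=n_x$, so the codomain $\prod_{\chi\in\mathcal{H}_F(A)}\L_{\#\chi(A)}$ becomes canonically $\prod_{x\in X}\L_{n_x}=A$ itself, and $\varepsilon_A$ becomes the map $f\mapsto (f(x))_{x\in X}$.

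With this set-up in hand, well-definedness of $\varepsilon_A$ is automatic, since $\chi(f)\in \chi(A)=\L_{\#\chi(A)}$ by \cite[Cor.~3.5.4, Cor.~7.2.6]{C2}, and $\varepsilon_A$ is an MV-homomorphism because each $\chi$ is and the operations on the product are componentwise. Injectivity is immediate: if $\varepsilon_A(f)=\varepsilon_A(g)$, then in particular $p_x(f)=p_x(g)$ for every $x\in X$, forcing $f=g$. For surjectivity, given $(a_\chi)\in\prod_\chi \L_{\#\chi(A)}$ and using $\chi=p_x$, the element $f\in A$ defined by $f(x)=a_{p_x}$ satisfies $\varepsilon_A(f)(p_x)=p_x(f)=a_{p_x}$ for every $x$, hence $\varepsilon_A(f)=(a_\chi)$.

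It remains to verify that $\varepsilon_A$ is an isomorphism in $\mathbb{P}^{op}$, i.e., that both $\varepsilon_A$ and its MV-algebra inverse reflect principal maximal ideals. This is automatic for any MV-algebra isomorphism: the image and preimage of a principal ideal $\langle b\rangle$ under an isomorphism is the principal ideal generated by the image or preimage of $b$, and maximality transfers across. Concretely, on the product side a principal maximal ideal is, by Lemma \ref{principal}, the kernel of some projection $q_\chi$, and $\varepsilon_A^{-1}(\ker q_\chi)=\{f\in A : \chi(f)=0\}=\ker\chi$, which is principal maximal by definition of $\mathcal{H}_F(A)$; the reverse direction is identical via the fact that principal maximal ideals of $A$ are the $\ker p_x$. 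The only genuinely nontrivial input is thus the identification $\mathcal{H}_F(A)\leftrightarrow X$, already packaged in Lemma \ref{principal}; everything else is routine bookkeeping, which is why the author has elected to omit it.
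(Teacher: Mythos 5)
Your proposal is correct and follows essentially the same route as the paper: decompose $A$ as $\prod_{x\in X}\L_{n_x}$ via Theorem \ref{MV-case}, use Lemma \ref{principal} together with the bijection $\chi\mapsto\ker\chi$ to identify $\mathcal{H}_F(A)$ with the set of projections $p_x$, and then read off well-definedness, injectivity, surjectivity, and the reflection of principal maximal ideals. The only difference is cosmetic: you additionally note that the inverse map reflects principal maximal ideals, a point the paper leaves implicit.
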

\begin{proof}
Since $\chi(A)=\L_{\#\chi(A)}$ for all $\in \mathcal{H}_F(A)$, it follows that $\varepsilon_A$ is well-defined. In addition, let $M$ be a principal maximal ideal of $\displaystyle\prod_{\chi\in \mathcal{H}_F(A)}\L_{\#\chi(A)}$, then by Lemma \ref{principal}, there exists $\chi_0\in \mathcal{H}_F(A)$ such that $M=M_{\chi_0}$. But, it is clear that $\varepsilon_A^{-1}(M_{\chi_0})=ker \chi_0$, which is principal maximal ideal of $A$. Thus, $\varepsilon_A$ reflects principal maximal ideals. It is straightforward to verify that $\varepsilon_A$ is a homomorphism of MV-algebras. It remains to prove that $\varepsilon_A$ is bijective.\\
Injectivity: Let $f, g\in A$ such that $\varepsilon_A(f)=\varepsilon_A(g)$, then for all $\chi\in  \mathcal{H}_F(A)$, $\chi(f)=\chi(g)$. Since $A$ is profinite, by Theorem \ref{MV-case}, there exists a set $X$ and a sequence of integers $\{n_x\}_{x\in X}$ such that $A=\displaystyle\prod_{x\in X}\L_{n_x}$. We have $p_x(f)=p_x(g)$ for all $x\in X$, hence $f(x)=g(x)$ for all $x\in X$ and $f=g$.\\
Surjectivity: Let $g\in \displaystyle\prod_{\chi\in \mathcal{H}_F(A)}\L_{\#\chi(A)}$. Since $A$ is profinite, by Theorem \ref{MV-case}, there exists a set $X$ and a sequence of integers $\{n_x\}_{x\in X}$ such that $A=\displaystyle\prod_{x\in X}\L_{n_x}$. Then, by Lemma \ref{principal}, $x \leftrightarrow p_x$ is a one-t-one correspondence between $X$ and $\mathcal{H}_F(A)$. Now define $f\in A$ by $f(x)=g(p_x)$. Then, it follows clearly that $\varepsilon_A(f)=g$.\\
Thus, $\varepsilon_A$ is an isomorphism in $\mathbb{P}^{op}$.
\end{proof}
\begin{thm}\label{eq1}
The composite $\mathcal{H}\circ \mathcal{F}$ is naturally equivalent to the identity functor of exists a natural isomorphism $\mathbb{M}$. In other words, for all multisets $\langle X, \sigma\rangle$, $\langle Y, \mu\rangle$ and $\varphi: \langle X, \sigma\rangle \to \langle Y, \mu\rangle$ a morphism in $\mathbb{M}$, we have a commutative diagram
$$
\begin{CD}
\langle X, \sigma\rangle @>\varphi>> \langle Y, \mu\rangle\\
@V\eta_XVV @VV\eta_YV\\
\mathcal{H}(\mathcal{F}(\langle X, \sigma\rangle)) @>\mathcal{H}(\mathcal{F}(\varphi))>> \mathcal{H}(\mathcal{F}( \langle Y, \mu\rangle))
\end{CD}
$$
in the sense that, for each $x\in X$, $\mathcal{H}(\mathcal{F}(\varphi)(\eta_X(x))=\eta_Y(\varphi(x))$
\end{thm}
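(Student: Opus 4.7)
The plan is to leverage the preceding proposition, which already provides that each component $\eta_X$ is an isomorphism in $\mathbb{M}$; only the naturality of the family $\{\eta_X\}$ remains to verify. Equivalently, I must show that for every morphism $\varphi:\langle X,\sigma\rangle\to\langle Y,\mu\rangle$ in $\mathbb{M}$ and every $x\in X$, the identity
$$\mathcal{H}(\mathcal{F}(\varphi))(\eta_X(x)) \;=\; \eta_Y(\varphi(x))$$
holds in $\mathcal{H}_F(A_{Y,\mu})$.

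First I would fix $x\in X$ and test both sides on an arbitrary $g\in A_{Y,\mu}$. Unfolding the left-hand side step by step from the definitions of $\mathcal{H}$, $\mathcal{F}$, and $\eta_X$, we have
$$\mathcal{H}(\mathcal{F}(\varphi))(\eta_X(x))(g) \;=\; \bigl(\eta_X(x)\circ \mathcal{F}(\varphi)\bigr)(g) \;=\; \eta_X(x)(\mathcal{F}(\varphi)(g)) \;=\; \mathcal{F}(\varphi)(g)(x) \;=\; g(\varphi(x)),$$
where the first equality uses the definition of $\mathcal{H}$ on morphisms, the second is composition of functions, the third uses the definition of $\eta_X$, and the last uses the definition of $\mathcal{F}(\varphi)$. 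The right-hand side is, by the definition of $\eta_Y$, simply $\eta_Y(\varphi(x))(g) = g(\varphi(x))$. Hence the two homomorphisms agree pointwise on $A_{Y,\mu}$, so they are equal as elements of $\mathcal{H}_F(A_{Y,\mu})$.

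Combining this with the fact from the previous proposition that every $\eta_X$ is an isomorphism in $\mathbb{M}$ gives that $\eta=(\eta_X)$ is a natural isomorphism from the identity functor on $\mathbb{M}$ to $\mathcal{H}\circ \mathcal{F}$, which is exactly the assertion of the theorem. There is no real obstacle here: all the genuine content (bijectivity of $\eta_X$, the divisibility condition, functoriality of $\mathcal{H}$ and $\mathcal{F}$) was absorbed into the earlier results, most notably Lemma \ref{principal}. What remains is purely a bookkeeping of four definitional rewrites, which is why the author flags such verifications in advance as the "simple computations" they omit.
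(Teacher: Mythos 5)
Your proposal is correct and follows exactly the same route as the paper's own proof: fix $x\in X$, evaluate both sides on an arbitrary $g\in A_{Y,\mu}$, and unwind the four definitions to get $g(\varphi(x))$ on each side. The only (welcome) addition is your explicit remark that combining this naturality square with the earlier proposition on the bijectivity of each $\eta_X$ yields the natural isomorphism, which the paper leaves implicit.
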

\begin{proof}
Let $x\in X$, then $\mathcal{H}(\mathcal{F}(\varphi))(\eta_X(x))=\eta_X(x)\circ \mathcal{F}(\varphi)$. For every $g\in A_{Y,\mu}$, 
$$
\begin{aligned}
(\eta_X(x)\circ \mathcal{F}(\varphi))(g)&=\eta_X(x)(\mathcal{F}(\varphi)(g))\\
&=\mathcal{F}(\varphi)(g)(x)\\
&=g(\varphi(x))\\
&=\eta_Y(\varphi(x))(g)
\end{aligned}
$$
Hence $\mathcal{H}(\mathcal{F}(\varphi)(\eta_X(x))=\eta_Y(\varphi(x))$ for all $x\in X$ as claimed.
\end{proof}
\begin{thm}\label{eq2}
The composite $\mathcal{F}\circ \mathcal{H}$ is naturally equivalent to the identity functor of exists a natural isomorphism $\mathbb{P}^{op}$. In other words, for all all profinite MV-algebras $A, B$ and $\varphi: A \to B$ a homomorphism in $\mathbb{P}^{op}$, we have a commutative diagram
$$
\begin{CD}
B @>\varphi>> A\\
@V\varepsilon_BVV @VV\varepsilon_AV\\
\mathcal{F}(\mathcal{H}(B)) @>\mathcal{F}(\mathcal{H}(\varphi))>> \mathcal{F}(\mathcal{H}(A))
\end{CD}
$$
in the sense that, for each $f\in B$, $\mathcal{F}(\mathcal{H}(\varphi))(\varepsilon_B(f))=\varepsilon_A(\varphi(f))$
\end{thm}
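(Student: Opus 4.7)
The plan is to piggy-back on the preceding proposition, which already shows that each $\varepsilon_A$ is an isomorphism in $\mathbb{P}^{op}$; what remains is solely the naturality of the family $\{\varepsilon_A\}$ expressed by the displayed square. I would reduce this to a pointwise verification inside the product MV-algebra $\mathcal{F}(\mathcal{H}(A))=\prod_{\chi\in \mathcal{H}_F(A)}\L_{\#\chi(A)}$: since two elements of such a product are equal iff they agree in every coordinate, it is enough to evaluate both sides of the desired identity at an arbitrary $\chi\in \mathcal{H}_F(A)$.

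For fixed $f\in B$ and $\chi\in \mathcal{H}_F(A)$, I would simply unfold the three defining formulas, namely $\varepsilon_B(f)(\chi')=\chi'(f)$ for $\chi'\in \mathcal{H}_F(B)$, $\mathcal{H}(\varphi)(\chi)=\chi\circ\varphi$, and $\mathcal{F}(\alpha)(h)(x)=h(\alpha(x))$, which chain together to yield
$$
\mathcal{F}(\mathcal{H}(\varphi))(\varepsilon_B(f))(\chi)=\varepsilon_B(f)(\chi\circ\varphi)=(\chi\circ\varphi)(f)=\chi(\varphi(f))=\varepsilon_A(\varphi(f))(\chi).
$$
This is purely formal; no further use of the profinite structure of $A$ or $B$ is needed beyond what was already absorbed into the well-definedness of the functors and into the previous proposition.

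The only genuine care-point is keeping the contravariance straight: $\varphi\colon A\to B$ in $\mathbb{P}^{op}$ is an underlying MV-homomorphism $\varphi\colon B\to A$ that reflects principal maximal ideals, whence $\mathcal{H}(\varphi)$ runs $\mathcal{H}_F(A)\to\mathcal{H}_F(B)$ and therefore $\mathcal{F}(\mathcal{H}(\varphi))$ runs $\mathcal{F}(\mathcal{H}(B))\to \mathcal{F}(\mathcal{H}(A))$, exactly matching the codomains of $\varepsilon_B$ and $\varepsilon_A$ in the square. Once that alignment is checked, the coordinate-wise computation above is immediate, and together with the preceding proposition it gives the natural equivalence $\mathcal{F}\circ\mathcal{H}\cong \mathrm{Id}_{\mathbb{P}^{op}}$. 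Combined with Theorem \ref{eq1}, this completes the proof of the Stone-type duality between $\mathbb{M}$ and $\mathbb{P}$.
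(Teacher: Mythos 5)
Your proposal is correct and follows essentially the same route as the paper: a pointwise evaluation at an arbitrary $\chi\in\mathcal{H}_F(A)$ that unfolds the definitions of $\varepsilon_B$, $\mathcal{H}(\varphi)$, and $\mathcal{F}$ to obtain $\mathcal{F}(\mathcal{H}(\varphi))(\varepsilon_B(f))(\chi)=\chi(\varphi(f))=\varepsilon_A(\varphi(f))(\chi)$. Your additional remarks on contravariance and on combining this with the preceding proposition are accurate but not needed beyond what the paper already records.
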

\begin{proof}
Let $f\in B$ and $\chi \in \mathcal{H}_F(A)$, then
$$
\begin{aligned}
\mathcal{F}(\mathcal{H}(\varphi))(\varepsilon_B(f))(\chi)&=\varepsilon_B(f)(\mathcal{H}(\varphi)(\chi))\\
&=\varepsilon_B(f)(\chi\circ \varphi)\\
&=(\chi\circ \varphi)(f)\\
&=\chi(\varphi(f))\\
&=\varepsilon_A(\varphi(f))(\chi)
\end{aligned}
$$
Hence, $\mathcal{F}(\mathcal{H}(\varphi))(\varepsilon_B(f))=\varepsilon_A(\varphi(f))$ for all $f\in B$, as desired.
\end{proof}
Combining Theorem \ref{eq1} and Theorem \ref{eq2}, we obtain the long sought duality.
\begin{cor}
The category $\mathbb{M}$ of multisets is dually equivalent to the category $\mathbb{P}$ of profinite MV-algebras and homomorphisms that reflect principal maximal ideals.
\end{cor}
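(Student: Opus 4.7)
The corollary is essentially a formal consequence of the results already assembled, so my plan is to assemble them cleanly rather than produce new content. A dual equivalence between $\mathbb{P}$ and $\mathbb{M}$ is by definition an equivalence of categories between $\mathbb{P}^{op}$ and $\mathbb{M}$, which in turn amounts to exhibiting a pair of functors in opposite directions together with natural isomorphisms between their composites and the respective identity functors. Those ingredients are precisely what the preceding development provides: the functors $\mathcal{H}\colon \mathbb{P}^{op}\to\mathbb{M}$ and $\mathcal{F}\colon\mathbb{M}\to\mathbb{P}^{op}$ are already in hand (functoriality was the content of the earlier proposition), and the candidate natural transformations are $\eta\colon \mathrm{Id}_{\mathbb{M}} \Rightarrow \mathcal{H}\circ\mathcal{F}$ and $\varepsilon\colon \mathrm{Id}_{\mathbb{P}^{op}}\Rightarrow \mathcal{F}\circ\mathcal{H}$ constructed componentwise by the two propositions preceding Theorem \ref{eq1}.

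My first step would be to recall that every component $\eta_X$ is an isomorphism in $\mathbb{M}$ and that every component $\varepsilon_A$ is an isomorphism in $\mathbb{P}^{op}$, which was the content of the two propositions establishing the bijectivity of $\eta_X$ and $\varepsilon_A$. Then I would cite Theorem \ref{eq1} and Theorem \ref{eq2}, which verify naturality: the first establishes $\mathcal{H}(\mathcal{F}(\varphi))\circ \eta_X = \eta_Y\circ\varphi$ for every morphism $\varphi\colon\langle X,\sigma\rangle \to \langle Y,\mu\rangle$ in $\mathbb{M}$, and the second establishes the analogous square in $\mathbb{P}^{op}$. Together these say that $\eta$ and $\varepsilon$ are genuine natural transformations whose components are all isomorphisms, hence natural isomorphisms.

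Having done this, I would invoke the standard categorical criterion that a pair of functors $(\mathcal{H},\mathcal{F})$ with natural isomorphisms $\mathrm{Id}\cong \mathcal{H}\mathcal{F}$ and $\mathrm{Id}\cong \mathcal{F}\mathcal{H}$ constitutes an equivalence of the categories involved. Applied to $\mathcal{H}\colon\mathbb{P}^{op}\to\mathbb{M}$ and $\mathcal{F}\colon\mathbb{M}\to\mathbb{P}^{op}$, this yields an equivalence $\mathbb{P}^{op}\simeq \mathbb{M}$, which is by definition a dual equivalence $\mathbb{P}\simeq \mathbb{M}^{op}$ written in the symmetric form given in the statement.

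There is no genuine obstacle here; the statement is a bookkeeping corollary. The only minor point worth being careful about is the direction of the arrows: one must check that the naturality squares of Theorems \ref{eq1} and \ref{eq2} really do assert naturality of $\eta$ and $\varepsilon$ as transformations into the composites $\mathcal{H}\mathcal{F}$ and $\mathcal{F}\mathcal{H}$ taken in $\mathbb{P}^{op}$ (so that morphisms in $\mathbb{P}^{op}$ act in the correct direction on underlying MV-algebras). Once that convention is fixed, the proof of the corollary is a single line invoking Theorems \ref{eq1} and \ref{eq2}.
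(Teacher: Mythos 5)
Your proposal matches the paper exactly: the paper derives the corollary in one line by combining Theorems \ref{eq1} and \ref{eq2} with the earlier propositions showing that each $\eta_X$ and each $\varepsilon_A$ is an isomorphism, which is precisely the assembly you describe. Your added care about the direction of arrows in $\mathbb{P}^{op}$ is a reasonable elaboration but does not change the argument.
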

\begin{rem}
While every MV-homomorphism reflects maximal ideals \cite[Prop. 1.2.16]{C2}, MV-homomorphism may not reflect principal maximal ideals. For instance, consider the simplest infinite profinite MV-algebra, namely $A=\mathbf{2}^X$ for some fixed infinite set $X$. Then $\oplus_{X}\mathbf{2}$ is an ideal of $A$, and is contained in a maximal ideal $M$ of $A$, which is not principal by Lemma \ref{principal}. But, $A/M$ is a Boolean algebra that is isomorphic to a Boolean subalgebra of $[0,1]$. Hence, $A/M\cong \mathbf{2}$. Now consider the natural projection  $p:A\to \mathbf{2}$, then $p^{-1}(0)=M$, which is not principal.
\end{rem}
\begin{rem}
The category $\mathbb{FMV}$ of finite MV-algebras is a full subcategory of $\mathbb{P}$ and when restricted $\mathbb{FMV}$, the equivalence yields the well known duality between finite MV-algebras and finite multisets.
\end{rem}

\end{document}